
\documentclass[letterpaper, 10 pt, conference]{ieeeconf}  

\IEEEoverridecommandlockouts                              

\overrideIEEEmargins                                      


\usepackage{epsfig} 
\usepackage{mathptmx} 
\usepackage{times} 
\usepackage{amsmath} 
\usepackage{amssymb}  
\usepackage{epstopdf}
\usepackage{color}
\usepackage{cite}

\newtheorem{theorem}{Theorem}
\newtheorem{lemma}{Lemma}
\newtheorem{mydef}{Definition}
\newtheorem{alg}{Algorithm}
\newtheorem{assumption}{Assumption}
\newtheorem{proposition}{Proposition}

\title{\LARGE \bf
Hamiltonian-based  Algorithm for Relaxed Optimal Control$^{\dag}$
}

\author{Y. Wardi, M. Egerstedt, and M.U. Qureshi$^{*}$
\thanks{$^{\dag}$Research supported in part by  the NSF  under Grant Number CNS-1239225. }
\thanks{$^*$School of Electrical and Computer Engineering, Georgia Institute of Technology, Atlanta, GA 30332. Email:
ywardi@ece.gatech.edu, magnus@ece.gatech.edu, umer.qureshi@gatech.edu.}
}

\begin{document}

\maketitle
\thispagestyle{empty}
\pagestyle{empty}

\begin{abstract}
This paper concerns a first-order algorithmic technique  for a class of  optimal control problems defined on switched-mode hybrid systems.
The salient feature of the algorithm
 is that it avoids the computation of Fr\'echet or G\^ateaux derivatives
of the cost functional, which can be time consuming, but rather moves in a projected-gradient direction that is easily computable (for a class of problems)
and does not require any explicit derivatives. The algorithm is applicable to a class of problems where a pointwise
minimizer of the
Hamiltonian is computable by a simple formula, and this includes many problems that arise in theory and applications.
The natural setting for the algorithm is the space of continuous-time relaxed controls, whose special structure renders the analysis simpler than the setting
of ordinary controls.
While the space of relaxed controls has theoretical advantages, its elements are abstract entities that may not
be
amenable to computation. Therefore, a key feature of the algorithm is that it computes adequate approximations to relaxed
controls without loosing its theoretical convergence properties.
Simulation results, including cpu times, support the theoretical developments.

\end{abstract}

\section{Introduction}

Consider the following  optimal control problem where the state equation is
\begin{equation}
\dot{x}(t)=f(x(t),u(t)),
\end{equation}
$x(t)\in R^n$ is the state variable, $u(t)\in R^k$ is the input, or control variable, time $t$ is confined
to a given interval $[0,t_f]$,
$f:R^n\times R^k\rightarrow R^n$ is the dynamic-response   function, the initial
state $x(0):=x_0\in R^n$ is given,
and  the cost functional
is
\begin{equation}
J=\int_{0}^{t_f}L(x(t),u(t))dt+\phi(x(t_f))
\end{equation}
for   cost functions $L:R^n\times R^k\rightarrow R$ and $\phi:R^n\rightarrow R$.
Let $U\subset R^k$ be a compact set, and consider the constraint that $u(t)\in U$ for every $t\in[0,t_f]$.
To ensure that Eq. (1) has a unique, continuous and piecewise-differentiable   solution,   the integral in Eq. (2) is well defined,
and other conditions mentioned in the sequel are satisfied, we make the following assumption.
\begin{assumption}
(1).  The function $f(x,u)$ is twice-continuously differentiable in $x\in R^n$ for every $u\in U$;
the functions $f(x,u)$,  $\frac{\partial f}{\partial x}(x,u)$, and
$\frac{\partial^2 f}{\partial x^2}(x,u)$ are locally-Lipschitz continuous in $(x,u)\in R^n\times U$; and
there exists $K>0$ such that, for every $x\in R^n$ and for every $u\in U$,
$||f(x,u)||\leq K(||x||+1)$.
(2). The function $L(x,u)$ is continuously differentiable in $x\in R^n$ for every
$u\in U$; and the functions $L(x,u)$ and   $\frac{\partial L}{\partial x}(x,u)$ are locally-Lipschitz continuous in
$(x,u)\in R^n\times U$.
\end{assumption}

Define an admissible control to be a function $u:[0,t_f]\rightarrow U$ which is piecewise continuously differentiable
and has a finite number of points of non-continuity. We denote an admissible control by the bar notation
$\bar{u}:=\{u(t)\}_{t\in[0,t_f]}$ to distinguish it from the value $u(t)\in U$ for a given $t\in[0,t_f]$.
Denote by ${\cal U}$ the space of admissible controls.

Observe that $J$ as defined by Eqs. (1) -(2) can be viewed as a function of $\bar{u}\in{\cal U}$, hence denoted
by $J(\bar{u})$ and called
a cost functional. The optimal control problem is to minimize $J(\bar{u})$ over $\bar{u}\in{\cal U}$.
Note that while the constraint set $U$ is assumed to be compact it need not be convex,
may have an empty interior  and even be
a finite set.

The optimal control problem can be viewed as a constrained optimization
problem on a function space, namely the space ${\cal U}$. Optimization algorithms defined on
an infinite-dimensional space (such as ${\cal U}$) must be discretized in order
to be solved by a numerical algorithm. There are basically two approaches to this: one discretizes the
problem first and then applies nonlinear-programming techniques to the resultant finite-dimensional problem,
and the other defines an algorithm in terms of the infinite-dimensional variable  and then discretizes the computations.
The latter approach was formalized in \cite{Polak97} and pursued in \cite{Caldwell11,Caldwell12,Miller13, Vasudevan13,Caldwell16,Wardi16}, and we   also adopt it for the following two reasons: (i).
The algorithm described below makes explicit use of the Hamiltonian function and the maximum principle,
hance it is more natural to describe and analyze it  in the  continuous-time, infinite-dimensional problem setting.
(ii). Discretization often involves balancing precision with complexity of computations, and discretizing the computations instead
of the problem affords the user considerable flexibility in determining the complexity level one iteration at a time.

We set the optimal control problem in the framework of relaxed controls \cite{McShane67,Young69,Warga72,Gamkrelidze78},
described in detail in the next section.
A relaxed control is a mapping $\mu$ from the interval $[0,t_f]$ into the set of Borel probability measures
on the set $U$. It is an extension of the notion of the ordinary control, defined as a Lebesgue-measurable
function $u:[0,t_f]\rightarrow U$.  An ordinary control can be viewed as
a relaxed control by associating with each $t\in[0,t_f]$ the Dirac measure at $u(t)$.

The space of relaxed controls is compact (in a suitable sense, discussed below) as well as convex, whereas the space of admissible controls typically is not compact and may not be convex. Therefore the setting of relaxed controls provides certain
theoretical advantages over
the setting of admissible controls, such as the existence of solutions to the optimal control problems   \cite{Warga72} and the simplicity of analysis of conceptual (abstract) algorithms \cite{Wardi16}. However, implementation may be more difficult due to the
fact that relaxed controls are more abstract objects than admissible controls. Much of the analysis in the sequel addresses
this point by identifying a class of hybrid systems where implementable algorithms are possible with limited complexity.
This class of systems is broad enough to include  various problems of practical and theoretical interest.

Consider the Hamiltonian function $H(x,u,p):=p^{\top}f(x,u)+L(x,u)$, where $p\in R^n$ is the costate (adjoint) variable.
Fix $x\in R^n$ and $p\in R^n$, and consider $H(x,u,p)$ as a function of $u\in U$. By Assumption 1 this function is continuous
and, since $U$ is compact, it admits a minimum at $U$. We call such a  minimum point a
{\it pointwise minimizer of the Hamiltonian}. The algorithm is suitable for  problems where a pointwise minimum
of the Hamiltonian is computable by a simple formula. \footnote{This is not the same as the statement that a control satisfying
a two-point boundary value problem can be easily computed, which we are not
making.}

Given a relaxed control, let $\{x(t)\}_{t\in[0,t_f]}$ and $\{p(t)\}_{t\in [0,t_f]}$ be the associated state trajectory and costate trajectory. The algorithm defines  a descent direction by computing a pointwise minimizer of the Hamiltonian
$H(x(t),u,p(t))$ for a finite set of points $t\in[0,t_f]$. It then takes a suitable step in that direction to compute
the next relaxed control. In an abstract setting of the algorithm it appears that its complexity grows without a bound in successive iterations, and the main contribution of the paper is to limit the complexity without detracting from the algorithm's convergence
properties.

A preliminary version of the algorithm and its analysis in the abstract setting of Eqs. (1) - (2)   have been presented in \cite{Wardi16},
but it was not applicable to a general class of switched-mode hybrid systems. The goal of this paper is to close this gap, and extend the algorithm
to systems and problems defined as follows. The state equation is
\begin{equation}
\dot{x}(t)\in\big\{f_{i}(x(t),u_i(t))~:~i=1,\ldots,M\big\},
\end{equation}
where the functions $f_{i}:R^n\times R^{k_i}\rightarrow R^n$, $i=1,\ldots,M$ represent different modes of the
 system, $u_i\in U_i\subset R^{k_i}$, and  the mode-dependent set $U_i$ is compact.  At each time $t\in[0,t_f]$ the control variable,
  denoted by $v(t)$, consists of the mode-index $i$ and the continuous-valued
control, namely, $v(t)=(i(t),u_{i(t)}(t))$, where $i(t)\in\{1,\ldots,M\}$ and $u_{i(t)}(t)\in U_{i(t)}$. Denote by $V$ the set of pairs $v=(i,u_i)$
such that $i\in\{1,\ldots,M\}$ and $u_i\in U_i$,  and let $\bar{v}$ denote an admissible control, namely the function
$\{v(t)\}_{t\in[0,t_f]}$ which is piecewise continuously differentiable and has finite numbers of discontinuities.
Denote by ${\cal V}$ the space of admissible controls.
 The cost functional
for the optimal control problem is
\begin{equation}
J=J(\bar{v})=\int_{0}^{t_f}L_{i(t)}(x(t),u_{i(t)}(t))dt+\phi(x(t_f)),
\end{equation}
where $L_i:R^{n}\times R^{k_i}\rightarrow R$, $i=1,\ldots,M$, are mode-dependent running cost functions, and $\phi:R^n\rightarrow R$ is
a final-state cost function. The optimal control problem is to minimize $J=J(\bar{v})$ over $\bar{v}\in{\cal V}$.

The rest of the paper is organized as follows.
Section II recounts relevant existing results,   Section III
extends the algorithm so as to be applicable  for a class of the aforementioned  switched-mode problems, and Section IV provides simulation results.

\section{Survey of Established Results}
This section surveys existing results which are relevant to the developments made in  the sequel. In particular we discuss E.
Polak's framework of infinite-dimensional optimization, the foundations of relaxed controls, and our preliminary algorithm presented in
\cite{Wardi16}.

\subsection{Optimality functions and sufficient descent}
Let ${\cal M}$ be a Hausdorff topological space with a Borel measure ${\cal F}$, and let $J:{\cal M}\rightarrow R$ be
a measurable function. Consider the abstract problem of minimizing $J(\bar{\mu})$ over $\bar{\mu}\in{\cal M}$.
For a given necessary optimality condition, let $\Delta\subset {\cal M}$ be the set of points $\bar{\mu}\in{\cal M}$ where it
is satisfied, and suppose that $\Delta$ is measurable. Let $\theta:{\cal M}\rightarrow R^-$ be
a measurable function. Polak defines $\theta$ to be an
 {\it optimality function} if (i) $\theta(\bar{\mu})=0$ iff $\bar{\mu}\in\Delta$, and (ii) $|\theta(\bar{\mu})|$ provides a measure
 of the
 extent to which $\bar{\mu}$ fails to satisfy the optimality condition.

Consider an iterative algorithm for minimizing $J$ over ${\cal M}$, and let $\bar{\mu}_{j}$, $j=1,2,\ldots$, be
a sequence of points it computes from a given
initial point $\bar{\mu}_{0}$. Suppose that we can represent the computation of $\bar{\mu}_{j+1}$ from $\bar{\mu}_{j}$ via the
notation $\bar{\mu}_{j+1}=T(\bar{\mu}_{j})$, for a measurable mapping
$T:{\cal M}\rightarrow{\cal M}$.
\begin{mydef}
The algorithm
is a {\it sufficient-descent} method with respect to $\theta$ if (i)
 for every $\bar{\mu}\in{\cal M}$, $J(T(\bar{\mu}))\leq J(\bar{\mu})$;
and (ii) for every $\eta>0$ there exists $\delta>0$ such that, for every $\bar{\mu}\in{\cal M}$ such that $\theta(\bar{\mu})<-\eta$,
\begin{equation}
J(T(\bar{\mu}))-J(\bar{\mu})<-\delta.
\end{equation}
\end{mydef}
In finite-dimensional optimization, where  ${\cal M}$ is a closed subset of $R^n$, it is common to characterize the convergence of an algorithm by the condition that every accumulation point of a computed sequence $\{\bar{\mu}_{j}\}_{j=1}^{\infty}$, is
contained in $\Delta$. In infinite-dimensional problems, there is no guarantee that a bounded sequence
would have an accumulation point, nor is it guaranteed that the
set $\Delta$ is non-empty.  For this reason the concept of optimality functions was defined in order to characterize convergence of the algorithm
by the following limit,
\begin{equation}
\lim_{j\rightarrow\infty}J(\bar{\mu}_{j})=0.
\end{equation}
The notion of sufficient descent guarantees this condition via the following
result.
\begin{proposition}
Suppose that $J(\bar{\mu})$ is bounded from below over
$\bar{\mu}\in{\cal M}$. If the algorithm is of sufficient descent, then every sequence $\{\bar{\mu}_{j}\}_{j=1}^{\infty}$ of iteration points computed by
the algorithm, satisfies Eq. (6).
\end{proposition}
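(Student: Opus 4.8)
The plan is a standard monotone-convergence-plus-contradiction argument that uses only the sign of $\theta$ and the two conditions in Definition 1; in particular no compactness of ${\cal M}$ and no accumulation-point reasoning are required. First I would invoke part (i) of the sufficient-descent property: since $\bar{\mu}_{j+1}=T(\bar{\mu}_{j})$, we have $J(\bar{\mu}_{j+1})\leq J(\bar{\mu}_{j})$ for every $j$, so the real sequence $\{J(\bar{\mu}_{j})\}$ is non-increasing. Because $J$ is bounded from below on ${\cal M}$, this sequence converges to a finite limit, and in particular $J(\bar{\mu}_{j})-J(\bar{\mu}_{j+1})\to 0$.

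Next I would argue by contradiction. Suppose Eq. (6) fails, i.e. $\theta(\bar{\mu}_{j})\not\to 0$; since $\theta$ takes values in $R^-$, this means there exist $\eta>0$ and a subsequence $\{\bar{\mu}_{j_k}\}_{k=1}^{\infty}$ with $\theta(\bar{\mu}_{j_k})<-\eta$ for all $k$. Applying part (ii) of Definition 1 to this $\eta$ yields a $\delta>0$ with $J(\bar{\mu}_{j_k+1})=J(T(\bar{\mu}_{j_k}))\leq J(\bar{\mu}_{j_k})-\delta$ for every $k$. Chaining this with the global monotonicity established above, and using $j_{k+1}\geq j_k+1$, one obtains $J(\bar{\mu}_{j_{k+1}})\leq J(\bar{\mu}_{j_k+1})\leq J(\bar{\mu}_{j_k})-\delta$, whence by induction $J(\bar{\mu}_{j_k})\leq J(\bar{\mu}_{j_1})-(k-1)\delta$. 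Letting $k\to\infty$ forces $J(\bar{\mu}_{j_k})\to-\infty$, contradicting the lower boundedness of $J$. Hence $\theta(\bar{\mu}_{j})\to 0$, which is Eq. (6).

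I do not anticipate a genuine obstacle; the only point deserving care is the bookkeeping in the contradiction step, where one must use that $J$ is non-increasing along the \emph{entire} sequence (not merely along the subsequence) in order to accumulate the per-step decrements of size $\delta$. An equivalent alternative is to observe that the telescoping sum $\sum_{j}\big(J(\bar{\mu}_{j})-J(\bar{\mu}_{j+1})\big)$ is bounded above by $J(\bar{\mu}_{0})-\inf_{{\cal M}}J<\infty$, so its terms tend to $0$; matching each index $j$ with $\theta(\bar{\mu}_{j})<-\eta$ against a decrement of at least $\delta$ then contradicts summability in the same way.
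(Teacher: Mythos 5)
Your proof is correct and is precisely the standard argument the paper has in mind: the paper itself supplies no details, stating only that ``the proof is immediate'' and deferring to \cite{Polak97}, and your monotonicity-plus-contradiction argument (or the equivalent telescoping-sum variant you sketch at the end) is exactly what fills that gap. The only point worth flagging is that Eq. (6) as printed reads $\lim_{j\rightarrow\infty}J(\bar{\mu}_{j})=0$, an evident typo for $\lim_{j\rightarrow\infty}\theta(\bar{\mu}_{j})=0$; you have correctly proved the intended statement.
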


The  proof is immediate; see \cite{Polak97}.\hfill$\Box$

We mention that these concepts form the foundations of a general framework for the development and analysis of algorithms, targeted  at
infinite-dimensional optimization problems  including optimal control.  For details, please see Chapter 4 in \cite{Polak97}.

\subsection{Relaxed Controls}
The theory of relaxed controls was developed in the late nineteen-sixties \cite{McShane67,Young69,Warga72,Gamkrelidze78}, and more recent surveys can be found in  \cite{Lou09,Vinter00,Berkovitz13}. This subsection
 summarizes its main points which are relevant to the present paper.

 Consider the optimal control problem defined in Section I.
 Let $M$ denote the space of Borel  probability measures on the set $U$, and denote by
 $\mu$ a particular measure in $M$.
A relaxed control  associated with the system (1) is a mapping $\mu:[0,t_{f}]\rightarrow M$ which is measurable
in the following sense: For every
continuous function $\zeta:U\rightarrow R$, the function $\int_{U}\zeta(u)d\mu(t)$ is Lebesgue measurable
in $t$.
We denote
the space of relaxed controls by ${\cal M}$, and   an element in this space is denoted by
$\bar{\mu}:=\{\mu(t)\}_{t\in[0,t_f]}$ to distinguish it from the points $\mu(t)\in M$ for every $t\in[0,t_f]$.

The space of relaxed controls is endowed with the weak star topology whereby
$\lim_{k\rightarrow\infty}\bar{\mu}_{k}=\bar{\mu}$ if
for every function  $\psi:[0,t_{f}]\times U\rightarrow R$ which is measurable and absolutely integrable
in $t$ on $[0,t_{f}]$ for every $u\in U$, and
continuous on $U$ for every $t\in[0,t_{f}]$,
\begin{equation}
\lim_{k\rightarrow\infty}\int_{0}^{t_{f}}\int_{U}\psi(t,u)d\mu_{k}(t)dt\ =\ \int_{0}^{t_{f}}\int_{U}\psi(t,u)d\mu(t)dt.
\end{equation}
The space ${\cal M}$ is compact in the weak star topology.

As noted earlier, every ordinary control $\bar{u}$ is associated with the relaxed control
$\bar{\mu}$ by defining $\mu(t)$ as the Dirac measure on $u(t)$. Therefore the space of ordinary controls is contained in the space
of relaxed controls, ${\cal M}$. Moreover, the space of ordinary controls is dense in ${\cal M}$ in the weak-star topology,
and since the space of admissible controls, ${\cal U}$, is dense in the space of ordinary controls in the $L^1$ topology (and hence in the weak-star topology as well), we have that ${\cal U}$ is dense in ${\cal M}$ in the weak-star topology.

An extension of the aforementioned optimal control
problem to the setting of relaxed controls is defined by generalizing Eqs. (1) and (2) to the
relaxed state equation and cost functional, defined as follows.
For a relaxed control $\bar{\mu}$,  the relaxed state equation
 is
\begin{equation}
\dot{x}(t)=\int_{U}f\big(x(t),u\big)d\mu(t)
\end{equation}
with the same boundary condition $x_{0}=x(0)$ as for (1),
and the relaxed  cost functional is
\begin{equation}
J(\bar{\mu})=\int_{0}^{t_{f}}\int_{U}L\big(x(t),u\big)d\mu(t) dt+\phi(x(t_f)).
\end{equation}
The relaxed optimal control problem is to minimize $J(\bar{\mu})$ over $\bar{\mu}\in{\cal M}$.
The relaxed costate (adjoint)
variable, denoted by $p(t)$, is defined by the equation
\begin{equation}
\dot{p}(t)=-\int_{U}\Big(\frac{\partial f}{\partial x}\big(x(t),u\big)^{\top}p(t)+\frac{\partial L}{\partial x}\big(x(t),u\big)^{\top}\Big)d\mu(t)
\end{equation}
 with the boundary condition $p(t_{f})=\nabla\phi(x(t_f))$,
and the relaxed Hamiltonian is defined as
\begin{align}
H\big(x(t),\mu(t),p(t)\big) \nonumber \\
=\int_{U}\Big(p(t)^{\top}f\big(x(t),u\big)+L\big(x(t),u\big)\Big)d\mu(t).
\end{align}
The maximum principle is in force and provides a natural necessary optimality condition for the relaxed optimal control
problem (see \cite{Vinter00}). It states that if $\bar{\mu}\in{\cal M}$ is a solution  for the relaxed optimal control problem then
$\mu(t)$ minimizes the Hamiltonian at almost every time-point $t\in[0,t_{f}]$.

In these notational usages we do not distinguish between relaxed controls and ordinary controls. For instance, if
$\bar{\mu}$ is associated with an ordinary control $\bar{u}$ in the manner described above, we write $\bar{\mu}=\bar{u}$,
and note that Eq. (8) is reduced to Eq. (1), and  similarly, the relaxed Hamiltonian in (11) is reduced to the ordinary Hamiltonian $H(x(t),u(t),p(t))$.

\subsection{Preliminary version of the algorithm}

This subsection describes the algorithm presented in \cite{Wardi16}, and recounts some theoretical results whose proofs can
be found in the latter reference. The setting of Eqs. (1) - (2) is assumed.

Consider a relaxed control  $\bar{\mu}\in{\cal M}$. Let  $x(t)$ and $p(t)$ denote the state variable and costate variable defined by Eqs. (8) and (10), respectively. For a given $\bar{\nu}\in{\cal M}$, the function
$
\Theta(\bar{\nu}):=\int_{0}^{t_f}H(x(t),\nu(t),p(t))dt
$
from ${\cal M}$ to $R$ is continuous in the weak-star topology  in ${\cal M}$
(see \cite{Lou09}),  and since ${\cal M}$ is compact in the weak
star topology, it attains a minimum in ${\cal M}$. Therefore the following function, $\theta(\bar{\mu})$ is well defined:
\begin{equation}
\theta(\bar{\mu}):=
\min_{\bar{\nu}\in{\cal M}}\Big(H(x(t),\nu(t),p(t))-H(x(t),\mu(t),p(t))\Big).
 \end{equation}
 We note that $\theta(\bar{\mu})$
 is an optimality
 function with respect to the relaxed maximum principle.

 For a given $t\in[0,t_f]$, consider the Hamiltonian function $H(x(t),u,p(t))$ as a function of
$u\in U$. By Assumption 1 this function is continuous, and since
$U$ is compact, it attains its minimum there. Let $u^{*}(t)$ be a minimum point. We have the following simple yet useful result:
\begin{lemma}
\cite{Wardi16}.  Given $t\in[0,t_f]$, for every Borel probability measure $\nu\in M$,
\begin{equation}
H(x(t),u^{*}(t),p(t))\leq \int_{U}H(x(t),\nu,p(t))d\nu(u).
\end{equation}
\end{lemma}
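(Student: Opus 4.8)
The plan is to reduce the inequality (14) to the defining property of the pointwise minimizer $u^{*}(t)$ together with the fact that $\nu$ is a probability measure. First I would fix $t\in[0,t_f]$ and regard $g(u):=H(x(t),u,p(t))=p(t)^{\top}f(x(t),u)+L(x(t),u)$ as a function of $u\in U$ only. By Assumption 1 the functions $f(\cdot,\cdot)$ and $L(\cdot,\cdot)$ are continuous in their arguments, hence $g$ is continuous on the compact set $U$; in particular $g$ is bounded and Borel measurable on $U$, and therefore $\nu$-integrable for every $\nu\in M$. This ensures that the right-hand side of (14) is well defined.

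Next, by the definition of $u^{*}(t)$ as a minimizer of $g$ over $U$, we have the pointwise inequality $g(u^{*}(t))\leq g(u)$ for every $u\in U$. Integrating both sides of this inequality against the measure $\nu$, regarding the left-hand side as the constant function $u\mapsto g(u^{*}(t))$, and using $\nu(U)=1$, we obtain
\[
g(u^{*}(t))=\int_{U}g(u^{*}(t))\,d\nu(u)\ \leq\ \int_{U}g(u)\,d\nu(u),
\]
which is exactly the asserted inequality once $g$ is rewritten in terms of $H$.

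There is essentially no obstacle here: the two points that require a word of care are (i) the integrability of $u\mapsto H(x(t),u,p(t))$ against an arbitrary Borel probability measure on $U$, which follows at once from continuity of $H$ in $u$ and compactness of $U$; and (ii) keeping the notation consistent, since the symbol $H$ is used both for the pointwise Hamiltonian $H(x,u,p)$ with $u\in U$ and, via the convention of Section II, for the relaxed Hamiltonian $H(x,\mu,p)$ with $\mu\in M$ as in (11) --- the right-hand side of (14) being the latter evaluated at $\mu=\nu$. The argument uses nothing about $U$ beyond compactness, so it remains valid when $U$ is non-convex or even finite.
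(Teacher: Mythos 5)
Your proof is correct and is the standard (essentially the only natural) argument: integrate the pointwise minimality $H(x(t),u^{*}(t),p(t))\leq H(x(t),u,p(t))$ against the probability measure $\nu$ and use $\nu(U)=1$, with continuity of $H$ in $u$ on the compact set $U$ guaranteeing integrability. The paper itself gives no proof, deferring to \cite{Wardi16}, and your reading of the right-hand side of (14) as the relaxed Hamiltonian (11) evaluated at $\nu$ --- i.e., with integrand $H(x(t),u,p(t))$ despite the typographical $\nu$ in the displayed formula --- is the intended one.
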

This result states that, for given $x(t)\in R^n$ and $p(t)\in R^n$,  the minimum of the Hamiltonian $H(x,\nu,p)$
over $\nu\in M$ is obtained by the Dirac measure at a point $u^{*}(t)\in U$.

It is tempting to conclude that
\begin{equation}
\theta(\bar{\mu})=\int_{0}^{t_f}\big(H(x(t),u^{*}(t),p(t))-H(x(t),\mu(t),p(t))\big)dt,
\end{equation}
and hence that $\theta(\bar{\mu})$ is realized by an ordinary control, i.e., $\bar{u}^{*}$. However, the function $\{u^{*}(t)\}_{t\in[0,t_f]}$
might not be measurable and hence does not qualify as an ordinary control. Thus, while $\theta(\bar{\mu})$ always can be realized by
a relaxed control, henceforth denoted by $\bar{\mu}^{*}$, it cannot necessarily be realized by an ordinary control.
However, given $\eta\in(1,0)$, it is always possible to find an admissible control
$\bar{u}^{*}_{\eta}$ such that
\begin{eqnarray}
\int_{0}^{t_f}\Big(H(x(t),u^{*}_{\eta}(t),p(t))-\\ \nonumber
H(x(t),\mu(t),p(t))\Big)dt<(1-\eta)\theta(\bar{\mu}).
\end{eqnarray}
For instance, take   $\bar{u}_{\eta}^{*}$ to be a  zero-order interpolation of points $u^{*}(t)$ for $t$ in a finite grid
${\cal G}\subset[0,t_f]$
of equally-spaced points.   Although the formulation of the algorithm
 (below) does not specify the particular choice of
  $\bar{u}^{*}_{\eta}$, this example can be practical as long as a pointwise minimizer of the Hamiltonian, $u^{*}(t)\in U$,
  can be easily computed by a simple formula for a given $t\in[0,t_f]$.

Similarly to the presentation in Section II.A, we describe the algorithm by specifying its main loop, represented by a mapping $T:{\cal M}\rightarrow{\cal M}$. Thus, starting from an initial
guess $\bar{\mu}_{0}\in{\cal M}$, the algorithm computes, iteratively,
a sequence $\{\bar{\mu}_{j}\}_{j\geq 1}$ such that, for all $j$, $\bar{\mu}_{j+1}=T(\bar{\mu}_{j})$. The mapping $T(\bar{\mu})$ is characterized by
two quantities: a direction, and a step size.
The direction we choose is an admissible  control $\bar{u}_{\eta}^{*}$ satisfying Eq. (15). For the step size we
choose the Armijo stepsize as formally specified by the algorithm below. We point out that the Armijo step size has been used extensively
in gradient-descent optimization including optimal control problems
 \cite{Polak97,Caldwell11,Caldwell12,Vasudevan13,Caldwell16}. It is described
as a part of the following formulation.

Given constants $\eta\in(0,1)$, $\alpha\in(0,1)$, and $\beta\in(0,1)$.

Given $\bar{\mu}\in{\cal M}$, compute $T(\bar{\mu})\in{\cal M}$ as follows.
\begin{alg}
{\it Step 1:} Compute $\{x(t)\}$ and $\{p(t)\}$, $t\in[0,t_f]$, by numerical means using Eqs. (8) and (10).\\
{\it Step 2:} Compute an admissible control $\bar{u}^{*}_{\eta}\in{\cal U}$ satisfying Eq. (15).\\
{\it Step 3:} Compute the largest $\lambda$ from the set $\{1,\beta,\beta^2,\ldots\}$ such that,
\begin{equation}
J\big((1-\lambda)\bar{\mu}+\lambda\bar{u}^{*}_{\eta}\big)-J(\bar{\mu})<\alpha\lambda\theta(\bar{\mu}).
\end{equation}
Denote the resulting $\lambda$ by $\lambda_{\bar{\mu}}$.\\
{\it Step 4:} Set $T(\bar{\mu})=(1-\lambda)\bar{\mu}+\lambda\bar{u}^{*}_{\eta}$.\hfill$\Box$
\end{alg}

We remark that the term $(1-\lambda)\bar{\mu}+\lambda\bar{u}^{*}_{\eta}$ indicates a convex combination of the relaxed controls $\bar{\mu}$ and $\bar{u}^{*}_{\eta}$
(the latter, too, as a relaxed control) in the sense of measures. Thus, the state trajectory of $(1-\lambda)\bar{\mu}+\lambda\bar{u}^{*}_{\eta}$, and its cost
$J((1-\lambda)\bar{\mu}+\lambda\bar{u}^{*}_{\eta})$, are defined via Eqs. (8) and (9) with the measure $(1-\lambda)\bar{\mu}+\lambda\bar{u}^{*}_{\eta}$ replacing
$\bar{\mu}$.

\begin{theorem}
For every $\eta\in(0,1)$ there exists $\bar{\alpha}\in(0,1)$ such that, for every choices of
$\alpha\in(0,\bar{\alpha})$ and  $\beta\in(0,1)$,
Algorithm 1 has the property of sufficient descent with respect to the relaxed maximum principle.\hfill$\Box$
\end{theorem}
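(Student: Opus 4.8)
The plan is to verify the two conditions of Definition~1 (sufficient descent) for the map $T$ of Algorithm~1, with the optimality function $\theta(\bar\mu)$ of Eq.~(13). Condition~(i), monotonicity $J(T(\bar\mu))\le J(\bar\mu)$, is essentially built into the Armijo rule in Step~3: since $\theta(\bar\mu)\le 0$ always, the defining inequality (17) forces $J(T(\bar\mu))-J(\bar\mu)<\alpha\lambda_{\bar\mu}\theta(\bar\mu)\le 0$. The real content is condition~(ii): given $\eta>0$ (here the ``$\eta$'' of Definition~1, not the algorithm's $\eta$) one must produce $\delta>0$ so that $\theta(\bar\mu)<-\eta$ implies $J(T(\bar\mu))-J(\bar\mu)<-\delta$. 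By (17) it suffices to bound the accepted step size $\lambda_{\bar\mu}$ away from zero uniformly over all $\bar\mu$ with $\theta(\bar\mu)<-\eta$; then $\delta:=\alpha(\inf\lambda_{\bar\mu})\eta$ works.

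To get a uniform lower bound on $\lambda_{\bar\mu}$ I would first establish a one-sided directional-derivative estimate along the relaxed-control segment $\bar\mu_\lambda:=(1-\lambda)\bar\mu+\lambda\bar u^*_\eta$. Writing $g(\lambda):=J(\bar\mu_\lambda)$, the key identity is that the right derivative $g'(0^+)$ equals $\int_0^{t_f}\big(H(x(t),u^*_\eta(t),p(t))-H(x(t),\mu(t),p(t))\big)dt$, which by the choice of $\bar u^*_\eta$ in Step~2 (Eq.~(15)) is strictly less than $(1-\eta)\theta(\bar\mu)$ --- this is the linearization of the relaxed dynamics (8) and cost (9), which are affine in the measure, so the first-order term is exactly the integrated Hamiltonian difference; Assumption~1 supplies the Lipschitz/$C^2$ regularity needed to make the remainder $o(\lambda)$ with a modulus uniform in $\bar\mu$ (the state and costate trajectories lie in a fixed compact set because of the linear-growth bound $\|f(x,u)\|\le K(\|x\|+1)$, Gronwall, and compactness of $U$). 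Hence there is a uniform $\bar\lambda>0$ such that for all $\lambda\in(0,\bar\lambda]$, $g(\lambda)-g(0)\le \lambda(1-\eta)\theta(\bar\mu)+\tfrac12\lambda|\theta(\bar\mu)|\cdot(\text{something}\le \bar\eta)$; choosing $\bar\alpha$ (and then $\alpha<\bar\alpha$) so that $\alpha<1-\eta$ with room to absorb the remainder, the Armijo test (17) is passed by every such $\lambda$. Therefore $\lambda_{\bar\mu}\ge\beta\bar\lambda$ for all $\bar\mu$, and in particular for those with $\theta(\bar\mu)<-\eta$.

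The main obstacle I anticipate is making the remainder estimate genuinely \emph{uniform} over the infinite-dimensional set $\{\bar\mu\in{\cal M}:\theta(\bar\mu)<-\eta\}$: one must argue that all the relevant trajectories $x(t),p(t)$ and their sensitivities stay in one compact set independent of $\bar\mu$, so that the local Lipschitz constants from Assumption~1 become global constants on that set, and that the $o(\lambda)$ term in the expansion of $J(\bar\mu_\lambda)$ is controlled by a single modulus of continuity. This is where the relaxed setting pays off --- the dynamics and cost are affine in $\mu$, so the segment $\bar\mu_\lambda$ stays in ${\cal M}$ and the perturbation analysis is a clean first-order Taylor expansion rather than a needle-variation argument --- but the bookkeeping of the Gronwall constants and the choice of $\bar\alpha$ in terms of $\eta$ (the algorithm parameter) and the global constants is the technical heart of the proof. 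Once that uniform expansion is in hand, invoking Proposition~1 is immediate and gives $\lim_{j\to\infty}\theta(\bar\mu_j)=0$, i.e.\ asymptotic satisfaction of the relaxed maximum principle. Full details follow the template of \cite{Wardi16}.
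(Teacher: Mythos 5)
Your outline is essentially the intended argument: the paper itself gives no proof of Theorem 1 --- it only restates the result from \cite{Wardi16} --- and the proof there follows your route (the one-sided derivative of $J$ along the relaxed segment equals the integrated Hamiltonian difference, Eq.~(15) bounds that derivative by $(1-\eta)\theta(\bar{\mu})$, a uniform second-order remainder obtained from Gronwall bounds and compactness of $U$ gives a step size $\bar{\lambda}$ accepted uniformly over $\{\bar{\mu}:\theta(\bar{\mu})<-\eta\}$, and $\bar{\alpha}=1-\eta$ works). The only loose phrasing is your claim that affineness of (8)--(9) in the measure makes the first-order term \emph{exactly} the integrated Hamiltonian difference: the state trajectory is not an affine function of $\bar{\mu}$, so that identity comes from the variational equation together with the costate equation (10) rather than from affineness alone, but the formula and the rest of the structure are correct.
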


If the pointwise minimizer of the Hamiltonian can be computed by a simple formula then the admissible control $\bar{u}^{*}_{\eta}$ can be
computed easily as well, and this may result in  Algorithm 1 being  efficient, as simulation results, presented in Section IV, will show. However, there is a difficulty that first must be overcome before the algorithm can be considered.
To explain it, consider
the case where, in Algorithm 1, the relaxed control  $\bar{\mu}$ is actually an admissible control, namely $\bar{\mu}=\bar{u}\in{\cal U}$. By Step 4 of the algorithm,
$T(\bar{\mu})$ is a convex combination (in the sense of measures) of two admissible controls,
namely $T(\bar{\mu})=(1-\lambda)\bar{\mu}+\lambda\bar{u}^{*}_{\eta}$. Likewise, the result of $j$ iterations of the algorithm (for $j\geq 1$) is a  convex combination of $j+1$ admissible
controls. These are not ordinary controls but relaxed controls, and as we can see, their convex dimensionality increases with $j$.
We will resolve this issue for a class of switched-mode systems.

\section{Switched-Mode Hybrid Systems}

In recent years there has been a mounting interest in the hybrid optimal control problem whose state equation
and cost functional are
defined by
Eqs. (3) and (4). A number of algorithmic approaches emerged, including
 first- and second-order gradient-descent techniques \cite{Xu02,Shaikh02,Xu04,Egerstedt06,Caldwell11,Vasudevan13},
zoning algorithms based on the geometric properties of
the underlying systems \cite{Shaikh03,Caines05,Shaikh05,Shaikh07},   projection-based algorithms
\cite{Caldwell11,Caldwell12,Miller13,Caldwell16},  methods based on dynamic programming and convex optimization
\cite{Hellund99}, and needle-variations techniques   \cite{Attia05,Gonzalez10,Wardi12,Taringoo13}.    A relaxed-control
algorithm was proposed in Ref.
 \cite{Ge75}. An embedded control approach
 was analyzed  in  \cite{Bengea05} and tested in conjunction with MATLB's fmincon nonlinear-programming solver \cite{Bengea05,Meyer12}.
 A comprehensive survey of algorithmic techniques for the hybrid optimal control problem can be found in
  \cite{Lin14}.

An explicit characterization of relaxed controls in the setting of hybrid systems defined by Eqs. (3)-(4) may
be complicated. In particular,  the fact that each constraint set $U_i$ depends on the mode-index $i$
can render challenging the handling of relaxed controls in a way that is amenable to efficient
computation by an algorithm. However,
Refs. \cite{Bengea05,Berkovitz13} resolve this difficulty by defining and  considering a space of  embedded controls lying
  between the space of ordinary controls and the space of
 relaxed controls.  Embedded controls are defined as follows \cite{Bengea05,Berkovitz13}: Let $W$ denote the set of $M$-tuples  of pairs, $\big((\alpha_1,u_1),(\alpha_2,u_2),\ldots,(\alpha_M,u_M)\big)$, where
$\alpha_{i}\geq 0$
$\forall i=1,\ldots,M$,
 $\sum_{i=1}^M\alpha_i=1$,
and $u_i\in U_i$, $i=1,\ldots,M$.
 An embedded control is
a Lebesgue measurable function $w:[0,t_f]\rightarrow W$, and we denote the space of embedded controls
by
${\cal W}$. Furthermore, we denote an embedded control $\{w(t)\}_{t\in[0,t_f]}$ by $\bar{w}\in{\cal W}$.

For a given $\bar{w}\in{\cal W}$, the state
 equation is defined by
\begin{equation}
\dot{x}(t)=\sum_{i=1}^{M}\alpha_i(t)f_i(x(t),u_i(t))
\end{equation}
with the given boundary condition $x(0)=x_0\in R^n$,
the cost functional has the form
\begin{equation}
J=\sum_{i=1}^{M}\int_{0}^{t_f}\alpha_{i}(t)L_i(x(t),u_i(t))dt+\phi(x(t_f)),
\end{equation}
and the costate equation is
 \begin{eqnarray}
 \dot{p}(t)=
 -\sum_{i=1}^{M}
 \alpha_{i}(t)
 \Big[\Big(\frac{\partial f_i}{\partial x}\big(x(t),u_{i}(t)
 \big)\Big)^{\top}p(t)\nonumber \\
 +\Big(\frac{\partial L_{i}}{\partial x}L_{i}\big(x(t),u_{i}(t)\big)\Big)^{\top}\Big]
 \end{eqnarray}
 with the boundary condition $p(t_f)=\nabla\phi(x(t_f))$. For a detailed expositions of embedded controls, see \cite{Bengea05,Berkovitz13}.

Ref. \cite{Bengea05} derived first- and second-order optimality conditions for various optimal control problem
formulations. In particular, for the problem defined  in this paper, the space of embedded controls is dense in the
space of relaxed controls
in the weak-star topology. If a problem includes  constraints on the final state then this density holds no more.
However, penalty functions can be used to alter the problem into one without final-state constraints.
 Our algorithm, defined below, computes in
the space of embedded controls.

In the last paragraph of Section II we described the challenge inherent in Algorithm 1 (in an abstract setting) due to
the increasing convex dimensionality  of relaxed controls computed in successive iterations.
  We address this difficulty for the hybrid optimal control problem in the following way.
Suppose that  the input $\bar{\mu}$ to a given iteration is an embedded control. The iteration computes the term $T(\bar{\mu})$
which is a relaxed control as defined by Step 4. Now we modify the algorithm by adding the computation of
an embedded control $\bar{y}\in{\cal W}$ having the property  that $J(\bar{y})\leq J(T\bar{\mu}))$. Since (by Theorem 1) Algorithm 1
has the sufficient-descent property with respect to the maximum principle, the latter inequality ensures that the sufficient-descent
property is maintained by the modified algorithm. We point out that $\bar{y}$ is not a projection of $T(\bar{\mu})$ onto the
space of embedded controls as in the projection-based algorithms \cite{Caldwell11,Caldwell12,Miller13, Caldwell16} mentioned above.

The class of problems for which the algorithm is applicable is defined as follows.

\begin{assumption}
For every $x\in R^n$, and for every $i=1,\ldots,M$,
(i) $f_i(x,u_i)$ is affine in $u_i\in U_i$,
and (ii) $L_i(x,u_i)$ is convex in $u_i\in U_i$.\hfill$\Box$
\end{assumption}

Part (i) of the assumption means that for every
$i=1,\ldots,M$
there exist functions $\Phi_i:R^n\rightarrow R^{n\times k_i}$ and $\Psi_i:R^n\rightarrow R^n$ such that,
\begin{equation}
f_i(x,u_i)=\Phi_i(x)u_i+\Psi_i(x).
\end{equation}
\begin{assumption}
For every $i=1,\ldots,M$, (i)
the functions $\Phi_{i}(x)$ and $\Psi_{i}(x)$ are twice-continuously differentiable, and (ii) the function $L_{i}(x,u)$ satisfies Assumption 1.
\end{assumption}

Given two embedded controls, $\bar{w}_{1}\in{\cal W}$ and $\bar{w}_{2}\in{\cal W}$, and given $\lambda\in[0,1]$, we use the
notation $(1-\lambda)\bar{w}_{1}\oplus\lambda\bar{w}_{2}$ to designate the convex combination $(1-\lambda)\bar{w}_{1}+\lambda\bar{w}_{2}$ in the sense of measures. Thus, if
$w_1(t)=\big((\alpha_{1,1}(t),u_{1,1}(t)),\ldots,(\alpha_{1,M}(t),u_{1,M}(t)\big)$
and $w_2(t)=\big((\alpha_{2,1}(t),u_{2,1}(t)),\ldots,(\alpha_{2,M}(t),u_{2,M}(t)\big)$, then the
state equation of $(1-\lambda)\bar{w}_{1}\oplus\lambda\bar{w}_{2}$ is
\begin{eqnarray}
\dot{x}(t)=(1-\lambda)\sum_{i=1}^{M}\alpha_{1,i}(t)f_{i}(x(t),u_{1,i}(t))\nonumber \\
+\lambda \sum_{i=1}^{M}\alpha_{2,i}(t)f_{i}(x(t),u_{2,i}(t)),
\end{eqnarray}
and the cost functional is
\begin{eqnarray}
J\big((1-\lambda)\bar{w}_{1}\oplus\lambda\bar{w}_{2}\big)\nonumber \\
=
(1-\lambda)\sum_{i=1}^{M}
\int_{0}^{t_f}\alpha_{1,i}(t)L_{i}(x(t),u_{1,i}(t))dt\nonumber \\
+\lambda\sum_{i=1}^{M}
\int_{0}^{t_f}\alpha_{2,i}(t)L_{i}(x(t),u_{2,i}(t))dt+\phi(x(t_f).
\end{eqnarray}

The following algorithm is described by specifying its main loop, as for Algorithm 1. The input to the main loop is  $\bar{w}\in{\cal W}$, and the corresponding
output is
$\bar{y}\in{\cal W}$. The first three steps of the algorithm  are identical to those of Algorithm 1, and the difference is
in Step 4.

Given constants $\eta\in(0,1)$, $\alpha\in(0,1)$, and $\beta\in(0,1)$.

\begin{alg} Given\\
 $\bar{w}=\big((\alpha_1(t),u_1(t)),\ldots,(\alpha_M(t),u_M(t))\big),~t\in[0,t_f]$.

{\it Step 1:} Compute $\{x(t)\}$ and $\{p(t)\}$, $t\in[0,t_f]$, by numerical means, using Eqs. (17) and (19). \\
{\it Step 2:} Compute an admissible control $\bar{u}^{*}_{\eta}\in{\cal U}$ satisfying Eq. (15). For every $t\in[0,t_f]$,
$u_{\eta}^{*}(t)=(j(t),u_{j(t)}^{*}(t))$ for some $j(t)\in\{1,\ldots,M\}$ and $u_{j(t)}^{*}(t)\in U_j(t)$, and we can
view it as an embedded control of the form
$u_{\eta}^{*}(t)=\big((\alpha^*_{1}(t),u_{1}^{*}(t)),\ldots,(\alpha^*_{M}(t),u_{M}^{*}(t))\big)$,
where, $\alpha^*_{j(t)}(t)=1$,
 and for all $i\neq j(t)$, $\alpha^*_{i}(t)=0$.
\\
{\it Step 3:} Compute the largest $\lambda$ from the set $\{1,\beta,\beta^2,\ldots\}$ such that,
\begin{equation}
J\big((1-\lambda)\bar{w}\oplus\lambda\bar{u}^{*}_{\eta}\big)-J(\bar{w})<\alpha\lambda\theta(\bar{w}).
\end{equation}
Denote the resulting $\lambda$ by $\lambda_{\bar{w}}$.\\
{\it Step 4:} For every $t\in[0,t_f]$, define
$\gamma_{i}(t)=(1-\lambda_{\bar{w}})\alpha_{i}(t)+\lambda_{\bar{w}}\alpha_{i}^{*}(t)$, and
define $\epsilon_{i}(t)=\lambda_{\bar{w}}\alpha_{i}^{*}(t)/\gamma_{i}(t)$. For every $i=1,\ldots,M$ define
$\tilde{u}_{i}(t)=(1-\epsilon_{i}(t))u_{i}(t)+\epsilon_{i}(t)u_{i}^{*}(t)$, and set
\begin{equation}
\bar{y}=\big((\gamma_{1}(t),\tilde{u}_{1}(t)),\ldots,(\gamma_{M}(t),\tilde{u}_{M}(t))\big),~t\in[0,t_f].
\end{equation}
\hfill $\Box$
\end{alg}
\begin{theorem}
Suppose that Assumption 2 and Assumption 3 are in force.
For every $\eta\in(0,1)$ there exists $\bar{\alpha}\in(0,1)$ such that, for every choices of
$\alpha\in(0,\bar{\alpha})$ and  $\beta\in(0,1)$,
Algorithm 2 has the property of sufficient descent with respect to the relaxed maximum principle.\hfill$\Box$
\end{theorem}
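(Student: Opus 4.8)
The plan is to derive Theorem 2 from Theorem 1 by exhibiting Algorithm 2 as a ``de‑relaxing'' variant of Algorithm 1 that never increases the cost relative to what Algorithm 1 would produce. The starting observation is that the hybrid problem (3)--(4), with the control variable taken to be the pair $v=(i,u_i)$ ranging over the compact (generally disconnected and nonconvex) set $V$ and with $f(x,v):=f_i(x,u_i)$, $L(x,v):=L_i(x,u_i)$, is an instance of the problem (1)--(2) of Section I to which Theorem 1 applies; relaxed controls on $V$ decompose at each $t$ as $\mu(t)=\sum_{i=1}^{M}\alpha_i(t)\mu_i(t)$ with $\alpha_i(t)\ge 0$, $\sum_i\alpha_i(t)=1$, and $\mu_i(t)$ a probability measure on the $i$‑th slice, and embedded controls are the special case in which each $\mu_i(t)$ is a Dirac at $u_i(t)$. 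Since Steps 1--3 of Algorithm 2 are literally Steps 1--3 of Algorithm 1 applied to $\bar w$ — the Armijo test (23) being (16) with $\oplus$ read as a convex combination of measures, and $\theta(\bar w)$ being the optimality function (12) evaluated at the embedded control $\bar w$ — the step $\lambda_{\bar w}$ computed there coincides with the one produced by $T$ in Algorithm 1. Writing $\hat y:=(1-\lambda_{\bar w})\bar w\oplus\lambda_{\bar w}\bar u^{*}_{\eta}=T(\bar w)$, Theorem 1 gives, for the $\bar\alpha\in(0,1)$ it furnishes and for $\alpha\in(0,\bar\alpha)$, $\beta\in(0,1)$, that $J(\hat y)\le J(\bar w)$, and that for every $\eta'>0$ there is $\delta>0$ with $\theta(\bar w)<-\eta'\Rightarrow J(\hat y)-J(\bar w)<-\delta$.

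The heart of the proof is then the claim that the output $\bar y$ of Step 4 is an admissible embedded control in ${\cal W}$, has the same state trajectory as $\hat y$, and satisfies $J(\bar y)\le J(\hat y)$. From $\gamma_i(t)=(1-\lambda_{\bar w})\alpha_i(t)+\lambda_{\bar w}\alpha^{*}_i(t)$ and $\epsilon_i(t)=\lambda_{\bar w}\alpha^{*}_i(t)/\gamma_i(t)$ one reads off, whenever $\gamma_i(t)>0$, the identities $\gamma_i(1-\epsilon_i)=(1-\lambda_{\bar w})\alpha_i$ and $\gamma_i\epsilon_i=\lambda_{\bar w}\alpha^{*}_i$; when $\gamma_i(t)=0$ both $(1-\lambda_{\bar w})\alpha_i(t)$ and $\lambda_{\bar w}\alpha^{*}_i(t)$ vanish (nonnegative summands of $0$), and one adopts the convention $\tilde u_i(t):=u_i(t)$, which makes the $i$‑th term drop out of the relevant sums on both sides. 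Using the affine form $f_i(x,u_i)=\Phi_i(x)u_i+\Psi_i(x)$ of Assumption 2(i) together with these identities, $\gamma_i\tilde u_i=(1-\lambda_{\bar w})\alpha_i u_i+\lambda_{\bar w}\alpha^{*}_i u^{*}_i$, hence $\gamma_i f_i(x,\tilde u_i)=(1-\lambda_{\bar w})\alpha_i f_i(x,u_i)+\lambda_{\bar w}\alpha^{*}_i f_i(x,u^{*}_i)$; summing over $i$ shows that the right‑hand side of the embedded state equation (17) for $\bar y$ equals that of (21) for $\hat y$, so the two trajectories, and thus the terminal costs $\phi(x(t_f))$, agree. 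For the running cost, convexity of $L_i$ in $u_i$ (Assumption 2(ii)) gives $L_i(x,\tilde u_i)\le(1-\epsilon_i)L_i(x,u_i)+\epsilon_i L_i(x,u^{*}_i)$; multiplying by $\gamma_i$ and using the same identities yields $\gamma_i L_i(x,\tilde u_i)\le(1-\lambda_{\bar w})\alpha_i L_i(x,u_i)+\lambda_{\bar w}\alpha^{*}_i L_i(x,u^{*}_i)$, and integrating over $[0,t_f]$ and summing over $i$ gives $J(\bar y)\le J(\hat y)$ by comparison of (18) with (22). Membership $\bar y\in{\cal W}$ uses that $\tilde u_i(t)$ is a convex combination of $u_i(t),u^{*}_i(t)\in U_i$ and that the weights $\gamma_i(t)$ are nonnegative with $\sum_i\gamma_i(t)=1$; measurability of $t\mapsto\bar y(t)$ follows from that of the functions $\alpha_i,u_i,\alpha^{*}_i,u^{*}_i$.

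Combining the two parts, $J(\bar y)\le J(\hat y)\le J(\bar w)$, and $\theta(\bar w)<-\eta'$ implies $J(\bar y)-J(\bar w)\le J(\hat y)-J(\bar w)<-\delta$; hence the map $\bar w\mapsto\bar y$ of Algorithm 2 is of sufficient descent with respect to the relaxed maximum principle, with the same $\bar\alpha$ as in Theorem 1. The step I expect to require the most care is exactly the trajectory‑and‑cost claim of the second paragraph: it is here that Assumption 2 is indispensable, since affineness of the $f_i$ is precisely what lets the mode‑wise mixing in the ``$\dot x$'' equation be rewritten as an ordinary convex combination with matched weights, and convexity of the $L_i$ is what turns the same rewriting into an inequality in the favorable direction. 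The remaining issues — fixing the behavior at points where some $\gamma_i(t)$ vanishes (so that $\epsilon_i(t)$ is nominally $0/0$), and the measurability and admissibility bookkeeping for $\bar y$ — are routine given the conventions above.
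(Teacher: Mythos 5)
Your proof is correct and follows essentially the same route as the paper: reduce to Theorem 1 by noting Steps 1--3 coincide with Algorithm 1, then show via the affine structure of $f_i$ that $\bar y$ and $(1-\lambda_{\bar w})\bar w\oplus\lambda_{\bar w}\bar u^{*}_{\eta}$ share the same state trajectory, and via convexity of $L_i$ that $J(\bar y)\le J\big((1-\lambda_{\bar w})\bar w\oplus\lambda_{\bar w}\bar u^{*}_{\eta}\big)$. Your explicit treatment of the $\gamma_i(t)=0$ degenerate case and the measurability of $\bar y$ is a welcome refinement that the paper's proof leaves implicit.
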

\begin{proof}
If Step 4 were to be replaced by $T(\bar{w}):=(1-\lambda_{\bar{w}})\bar{w}\oplus\lambda_{\bar{w}}\bar{u}_{\eta}^*$ then Algorithm 2
would be reduced to Algorithm 1, and by Theorem 1 it would have the sufficient-descent property. Therefore the theorem will be proved
once it is shown that, for $\bar{y}$ as defined
by Eq. (24) in Step 4, the following inequality is in force:
\begin{equation}
J(\bar{y})\leq J\big((1-\lambda_{\bar{w}})\bar{w}\oplus\lambda_{\bar{w}}\bar{u}_{\eta}^*\big).
\end{equation}
That is what we next prove.

{\it To simplify the notation in the  following discussion we omit the explicit dependence of various quantities on time $t$}.

By Eq. (24) and the definition of $\gamma_i(t)$ in Step 4,  it is seen that  $\bar{y}$ is an embedded control. In contrast,
 $(1-\lambda_{\bar{w}})\bar{w}\oplus\lambda_{\bar{w}}\bar{u}_{\eta}^*$ is
a relaxed control but not necessarily an embedded control. Nonetheless, we next prove that $\bar{y}$ and
 $(1-\lambda_{\bar{w}})\bar{w}\oplus\lambda_{\bar{w}}\bar{u}_{\eta}^*$ have the same associated state trajectories. Let $\{x(t)\}_{t\in[0,t_f]}$ denote the state trajectory
of $\bar{y}$. By Eqs. (24) and (17),
$\dot{x}=\sum_{i=1}^{M}\gamma_if_i(x,\tilde{u}_{i})$, and by the definition of $\tilde{u}_{i}$ (Step 4),
\begin{equation}
\dot{x}=\sum_{i=1}^{M}\gamma_{i}f_{i}(x,(1-\epsilon_{i})u_{i}+\epsilon_{i}u_{i}^*).
\end{equation}
By Eq. (20) and a bit of algebra,
\begin{equation}
\dot{x}=\sum_{i=1}^{M}\gamma_i\Big((1-\epsilon_{i})f_{i}(x,u_{i})+\epsilon_{i}f_{i}(x,u_{i}^*)\Big).
\end{equation}
Therefore, and by the definitions of $\epsilon_{i}$ and $\gamma_{i}$ in Step 4,
\begin{equation}
\dot{x}=\sum_{i=1}^{M}\Big((1-\lambda_{\bar{w}})\alpha_{i}f_{i}(x,u_{i})+\lambda_{\bar{w}}\alpha_{i}^*f_{i}(x,u_{i}^*)\Big).
\end{equation}
By Eq. (21) this is the state equation of $(1-\lambda_{\bar{w}})\bar{w}\oplus\lambda_{\bar{w}}\bar{u}_{\eta}^*$. Since both start at the initial
condition $x_{0}$, the two state trajectories are identical.

Next, consider the cost functions $J(\bar{y})$ vs.
$J\big((1-\lambda_{\bar{w}})\bar{w}\oplus\lambda_{\bar{w}}\bar{u}_{\eta}^*\big)$. By Eqs. (24) and (18),
\begin{equation}
J(\bar{y})=\sum_{i=1}^{M}\int_{0}^{t_f}\gamma_{i}L_{i}(x,\tilde{u}_{i})dt+\phi(x(t_f)).
\end{equation}
By the definition of $\tilde{u}_i$ in Step 4,
$L_i(x,\tilde{u}_i)=L_i(x,(1-\epsilon_i)u_i+\epsilon_iu_i^{*})$.
Therefore, and by Assumption 2(ii), $L_i(x,\tilde{u}_{i})\leq(1-\epsilon_{i})L_{i}(x,u_{i})+\epsilon_{i}L_{i}(x,u_{i}^{*})$.
Plug this inequality in (29). By the definitions of $\epsilon_{i}$ and $\gamma_{i}$ in Step 4,
\begin{eqnarray}
J(\bar{y})\leq\nonumber \\
\sum_{i=1}^{M}
\int_{0}^{t_{f}}\big((1-\lambda_{\bar{w}})\alpha_{i}L_{i}(x,u_{i})
+\lambda_{\bar{w}}\alpha_{i}^*L_{i}(x,u_{i}^*)\big)dt+\phi(x(t_{f}).
\end{eqnarray}
Since the state trajectories of $\bar{y}$ and $(1-\lambda_{\bar{w}})\bar{w}\oplus\lambda_{\bar{w}}\bar{u}_{\eta}^*$
are identical, and by Eq. (22), we recognize the RHS of (30) as
$J\big((1-\lambda_{\bar{w}})\bar{w}\oplus\lambda_{\bar{w}}\bar{u}_{\eta}^*\big)$.
This establishes that
$J(\bar{y})\leq J\big((1-\lambda_{\bar{w}})\bar{w}\oplus\lambda_{\bar{w}}\bar{u}_{\eta}^*\big)$, which completes the proof.
\end{proof}

\section{Examples}

This section presents three examples: an autonomous switched-mode system, an unstable hybrid system, and a spring-mass
 damper system.
The algorithm was coded by a MATLAB script, and run on a system based on an Intel Core i5 processor
 with 2.8 GHz clock.  All of the numerical integrations were performed by the forward Euler method or the
trapezoidal method.

\subsection{Curve tracking in a double-tank system}
Consider two cylindrical fluid tanks situated one on top of the other, each having  a hole at the bottom.  Fluid flows into each tank from the top and out through the hole.
The input flow to the upper tank comes from a valve-controlled hose, and the input flow to the lower
tank consists of the output flow from the upper tank. Let $v(t)$ denote the input flow rate to the upper tank, and let $x_{1}(t)$ and $x_{2}(t)$ denote the
amount of fluid in the upper tank and lower tank, respectively. $v(t)$ is the control input to to the system, and $x(t):=(x_{1}(t),x_{2}(t))^{\top}$ is
its state variable. By Toricelli's law the state equation is
\begin{equation}\label{1}
\begin{aligned}
\dot{x}(t) = \begin{pmatrix}
v(t)-\sqrt{x_1(t)}\\
\sqrt{x_1(t)}-\sqrt{x_2(t)}
\end{pmatrix},
\end{aligned}
\end{equation}
and we assume that the initial state is
$x(0) = (2.0, 2.0)^{\top}$. The control input $v(t)$ is assumed to be constrained to the two-point set $V := \left \{1.0, 2.0\right \}$, and hence
we can view the system as having two modes, mode 1 when $v(t)=1$, and mode 2 when $v(t)=2$. Using the modal notation, we can write
the state equation  as $\dot{x}\in\{f_{1}(x),f_{2}(x)\}$ with $f_{i}(x)$ defined by the RHS of (31) with $v(t)=i$, $i=1,2$.
We consider the problem of having
 the fluid level in the lower tank track a reference
curve $\{r(t)\}_{t\in[0,t_f]}$ for a given $t_f>0$, and correspondingly we minimize the cost functional
\begin{equation}\label{2}
\begin{aligned}
J:=2\int_{0}^{t_f}(x_2(t)-r(t))^2dt.
\end{aligned}
\end{equation}
This is an autonomous switched-mode system without a continuous-valued control $u$. Therefore the Hamiltonian function is
$H(x,v,p)=p^{\top}f(x,v)+L(x,t)$,  with $L(x,t)=2(x-r(t))^2$, and for given $x\in R^2$ and $p\in R^2$, its pointwise minimizer is $v^{*}(t)\in\{1,2\}$.
A measure $\mu\in M$ can be represented by a point $p\in[0,1]$, where $\mu(\{1\})=p$ and $\mu(\{2\})=1-p$, and hence a relaxed control
is a function $\mu:[0,t_f]\rightarrow[1,2]$.
Such systems are simpler than the controlled-systems discussed in Section III, the Hamiltonian is easily minimized (pointwise),
and Algorithm 2 is reduced to Algorithm 1. We provide this example nonetheless in order to highlight some features of the algorithm.

This problem was addressed  in \cite{Vasudevan13,Meyer12,Wardi16} with a constant target $r(t)=3.0$, while here we track  the  time-varying target curve $r(t)=0.5sin(0.1\pi t)+2.5$ over $t\in[0,30]$.
The algorithm's parameters are $\alpha=0.5$ and $\beta=0.5$.  All of the numerical integrations are performed
by the forward Euler method with the time step $\Delta t=0.01$. The initial control is $v(t)=2$ $\forall~t\in[0,t_f]$, and its cost
is $J(\bar{v}_{1})=84.185$.

The algorithm was run for 100 iterations, and its execution  took 17.207 seconds of cpu time. Figure 1
depicts the graph of $J(\bar{v}_{k})$ vs. the iteration count $k$,
and it exhibits sharp decrease before flattening after about 10  iterations. The final cost is $J(\bar{v}_{100})=2.627$, and the
graphs of the corresponding $x_{2}(t)$ (solid curve) and its target $r(t)$ (dashed curve) are
shown in Figure 2 for the sake of comparison.

\vspace{.2in}
\begin{figure}[h]
\centering
\includegraphics[width=0.35\textwidth]{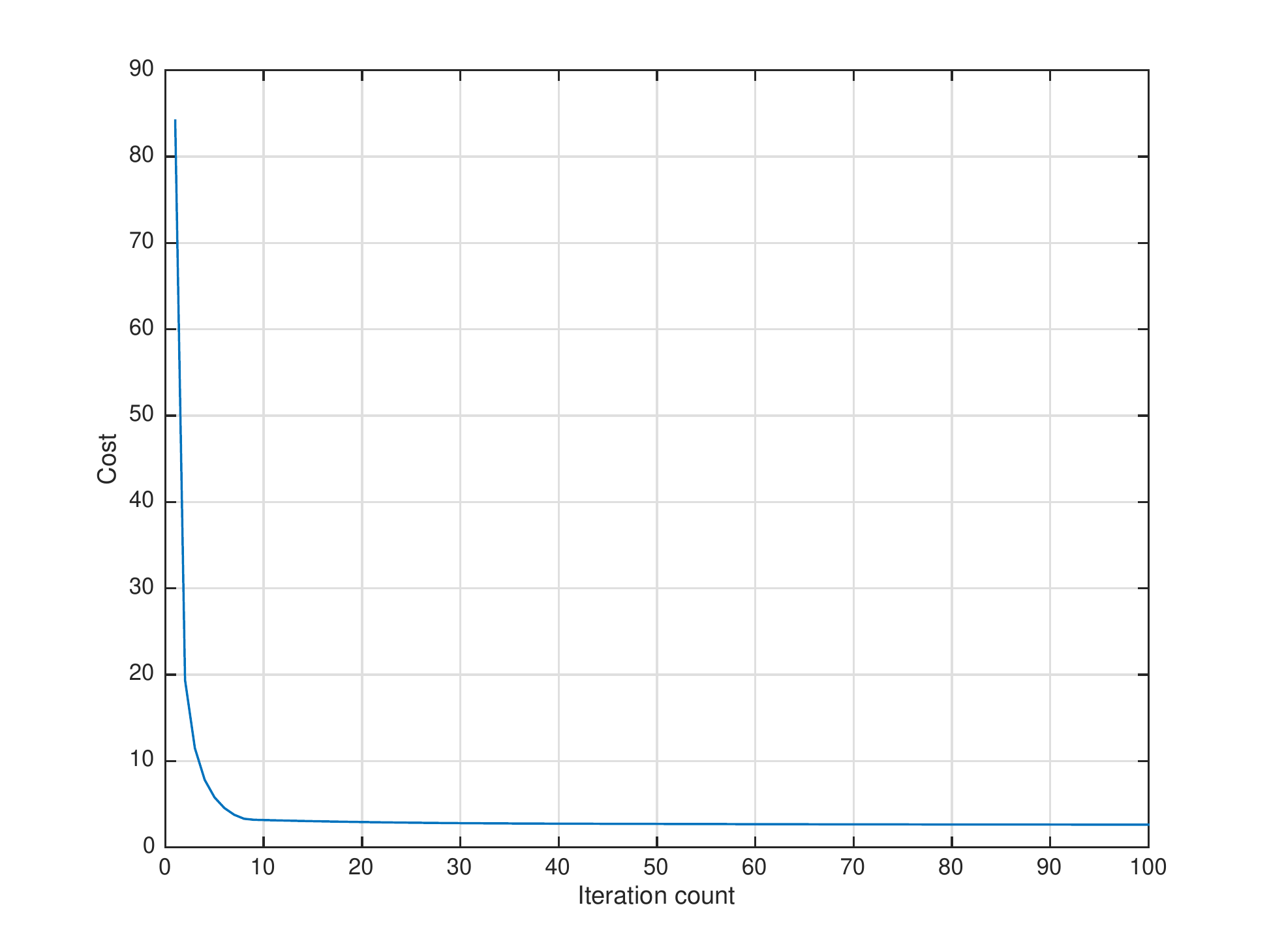}
{\small \caption{Double-tank system: Cost function vs. $k$}}
\end{figure}

\vspace{.2in}
\begin{figure}[h]
\centering
\includegraphics[width=0.35\textwidth]{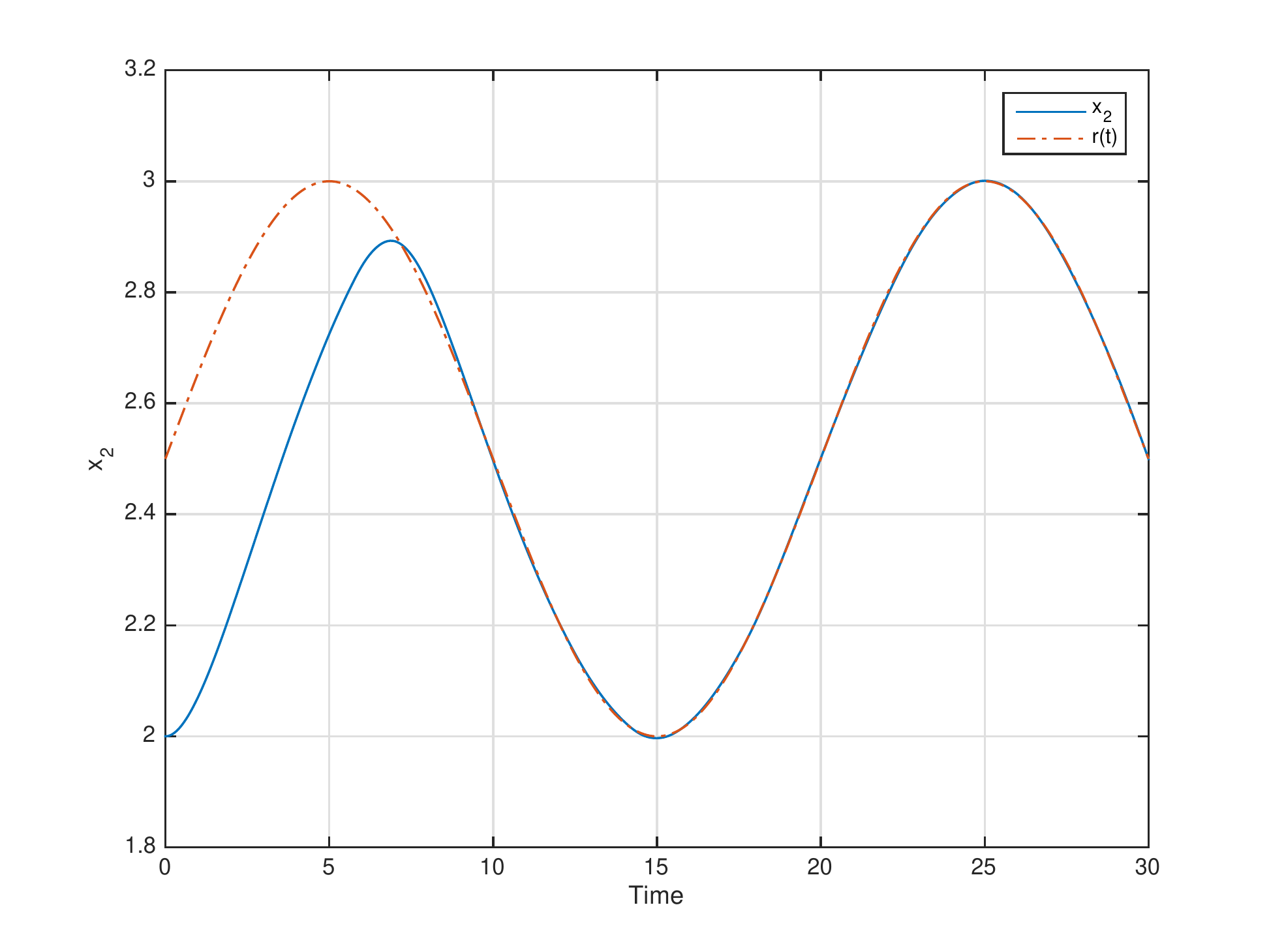}
{\small \caption{Double-tank system: $x_{2}(t)$ and $r(t)$}}
\end{figure}


Figure 1 indicates  a fast approach of the cost $J(\bar{v}_{k})$ towards its minimum value. Such $L$-shaped graph is not uncommon for descent algorithms
with Armijo step size. Its asymptotic convergence typically is slower than that of algorithms with superlinear convergence rate  \cite{Polak97}. On the other hand
they are descent methods that often take large strides towards local-minimum points at the early phases of their runs, hence the L-shaped graph in
Figure 1. After the run we projected the final relaxed control $\bar{v}_{100}$ onto the space of ordinary controls by
using pulse-width modulation, and the resulting control, denoted by $\bar{v}_{fin}$, has a cost of $J(\bar{v}_{fin})=2.7051$.

Finally, in order to explore ways to reduce the run times of the algorithm we experimented with fewer iterations and larger
integration step sizes. The results are summarized in Table 1, where $\Delta t$ indicates the integration step, $k$ is the number of iterations, $J(\bar{v}_{1})$ indicates the initial cost, $J(\bar{v}_{k})$ is the final cost of the algorithm, and $CPU$ is the cpu time of the run in seconds. We note that an increase of $\Delta t$ by a factor of 10 results in cpu reduction by about  a factor of 10
but with little increase in the final cost.

\begin{table}[h]
\centering
 \begin{tabular}{||c | c |c | c | c||}
 \hline
 $\Delta$ & k & $J(\bar{v}_{1})$ & $J(\bar{v}_{k})$ & CPU \\
 \hline\hline
 0.01 & 100 & 84.185 & 2.627 & 17.207 \\
 \hline
  0.01 & 50 & 84.185 & 2.7482 & 7.9468 \\
 \hline
 0.1 & 100 & 84.883 & 2.662 & 1.433 \\
 \hline
  0.1 & 50 & 84.883 & 2.7382 & 0.8138 \\
  \hline
\end{tabular}
\caption{Double tank problem}
\end{table}

\subsection{Control of an unstable hybrid system}
The following LQR system was considered in \cite{Xu04,Shaikh07}. The system has two modes, indexed by $i=1,2$. The dynamic
response functions  are
 $f_{i}(x,u)=A_{i}x+b_{i}u$,
where $x\in R^2$, $u\in R$, $A_{i}\in R^{2\times 2}$, and $b_{i}\in R^{2\times 1}$. The matrices
$A_{i}$ and $b_{i}$ are
\[
A_{1}=\left(
\begin{array}{cc}
0.6 & 1.2\\
-0.8 & 3.4
\end{array}
\right),\ \ \ \ \ A_{2}=\left(
\begin{array}{cc}
4.0 & 3.0\\
-1.0 & 0
\end{array}
\right),
\]
$b_{1}=(1,1)^{\top}$, and $b_{2}=(2,-1)^{\top}$. The initial condition is $x_0=(0,2)^{\top}$, and the final time
is $t_f=2.0$. The cost functional is
$
J=\int_{0}^{2}\frac{1}{2}\big(x_{2}(t)-2)^2+u(t)^2\big)dt
+\frac{1}{2}\big(x_{1}(2)-4\big)^2+\frac{1}{2}\big(x_2(2)-2\big)^2.
$
According to the problem formulation in Refs. \cite{Xu04,Shaikh07} the sequence of modes is fixed at $\{1,2\}$, and the control variable $v$ consists of the sole switching time between them and the continuous-valued input $u(t)$, $t\in[0,2]$.
In this paper the control variable consists of the mode-schedule without restrictions, and the continuous-valued control $u(t)$,
$t\in[0,2]$.
We use the trapezoidal method for integrating the differential equations. The initial guess
for the algorithm consists of mode 1 and $u(t)=0$ for all $t\in[0,2]$, and we ran the algorithm for 400 iterations.

The dominant eigenvalue of both matrices $A_{1}$ and $A_{2}$ is 3.0, hence the system is highly unstable. Therefore the algorithm did not work well
with single-shooting integrations of the state equation, and yielded a final cost of about 14.2, which is
higher than that obtained in \cite{Xu04,Shaikh07}
with a more-restricted control (9.766). Consequently we used multi-shooting integrations
 in the following way.
With $N$ denoting the number of shootings,  we divided the time-interval $[0,2]$ into $N$ equal-lengths subintervals
with end-points $0<\tau_{1}<\ldots<\tau_{N-1}<2$, introduced the additional variables $z_{j}$, $j=1,\ldots,N-1$ as the initial
condition for the state equation during the subinterval beginning at $\tau_{j}$, and added to the cost
the penalty term $K\sum_{j=1}^{N-1}||x(\tau_{j}^{-})-z_{j}||^2$. The penalty constant $K$ was determined by
the formula $K=2.5(N-1)$, since we felt that a higher penalty constant was needed for larger numbers of shooting intervals.
The integration step size was set to $\Delta t=\frac{0.1}{N-1}$.

After some experimentation we chose $N=10$,  hence $K=22.5$ and $\Delta t=0.011$. A  400-iteration run of the algorithm took 14.4 seconds of cpu time, and yielded the final cost of
$J(\bar{v}_{400})=7.0913$. Additional runs supported this result and indicated  that the obtained cost is practically
close to the minimum.
Moreover, the graph of $J(\bar{v}_{k})$ vs. $k=1,2,\ldots$ displays a similar L-shaped curve as in Figure 2. The final state trajectories $x_{1}(1)$ and $x_{2}(t)$ are shown in Figure 3.

\vspace{.2in}
\begin{figure}[h]
\centering
\includegraphics[width=0.35\textwidth]{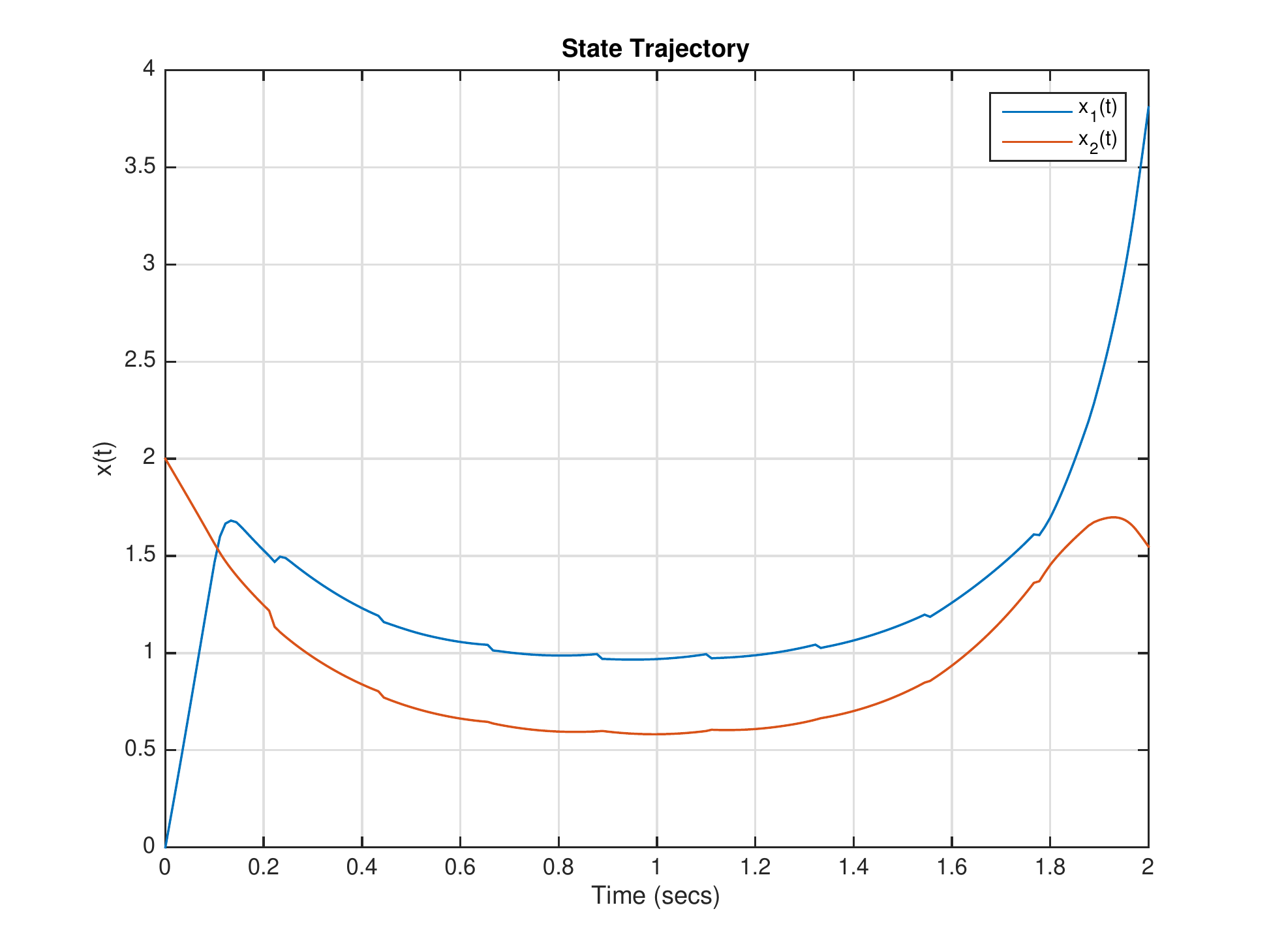}
{\small \caption{Unstable hybrid system: $x_{1}(t)$ and $x_{2}(t)$}}
\end{figure}

\subsection{Controlling a mass-spring damper system}

The problem described in this subsection has been considered in \cite{Borrelli13} which applied to it model-predictive control,
and a similar problem was solved in \cite{Bengea05} by a numerical algorithm.

Consider a mass connected to ground by
a  spring in series with a damper that represents viscous friction. Let $u(t)$ be an applied external force, and let $x_{1}(t)$ and
$x_{2}(t)$ be the mass' position and velocity. The system has two modes, indexed by $i\in\{1,2\}$, representing two levels of viscosity.
The state equation is
\begin{equation}
\begin{aligned}
\dot{x_{1}}(t) &= x_{2}(t) \\
M\dot{x_{2}}(t) &= -k(x_{1}(t)) - b_{i}x_{2}(t) + u(t),
\end{aligned}
\end{equation}
where $M$  the mass; the spring coefficient $k(x_{1})$ is $k(x_1)=x_1+1$ if $x_1\leq 1$, and $k(x_1)=3x_1+7.5$ if
$x_1>1$;
and the viscous friction coefficient is $b_1=1$ and $b_2=50$. The initial condition is $x_0=(3,4)^{\top}$.
We take the mass to be $M=1$. The cost functional is
$J=\int_{0}^{t_f}\big(||x(t)||^2+L_{i}(u(t))\big)dt+||x(t_f)||^2$,
where the mode-dependent cost function
is $L_1(u)=0.2u^2$, and $L_{2}(u)=0.2u^2+1$. We impose the constraints that, for all $t\in[0,t_f]$,  $|x_{j}(t)|\leq 5$, $j=1,2$,  and   $|u(t)|\leq 10$; and the finel-state constraint $|x_{j}(t_f)|\leq 0.01$, $j=1,2$.
We chose the final time to be $t_f=12.0$.

In order to satisfy the final-state constraints we appended the cost
functional by the penalty term $5||x_{1}(t_{f})||^2+30||x_{2}(t_{f})||^2$. We applied Algorithm 2 with the  parameters, $\alpha=0.01$ and  $\beta=0.5$, and the integration step size $\Delta t=0.01$.
The initial guess was $\alpha_{1}(t)=1$ (i.e., mode 1), and $u_{1}(t)=u_{2}(t)=0$ for all $t\in[0,t_f]$. 50 iterations took
11.6950 seconds of cpu time, and reduced the cost from its initial value of 94.0906 to its final value of 14.5166. The state
trajectory is shown in Figure 4, and the final state is $x(t_f)=(0.001,-0.0076)^{\top}$. A PWM-based projection of the final embedded
control
onto the space of ordinary controls incurs the cost $J$, excluding the penalty term, of $15.1954$.

\vspace{.2in}
\begin{figure}[h]
\centering
\includegraphics[width=0.35\textwidth]{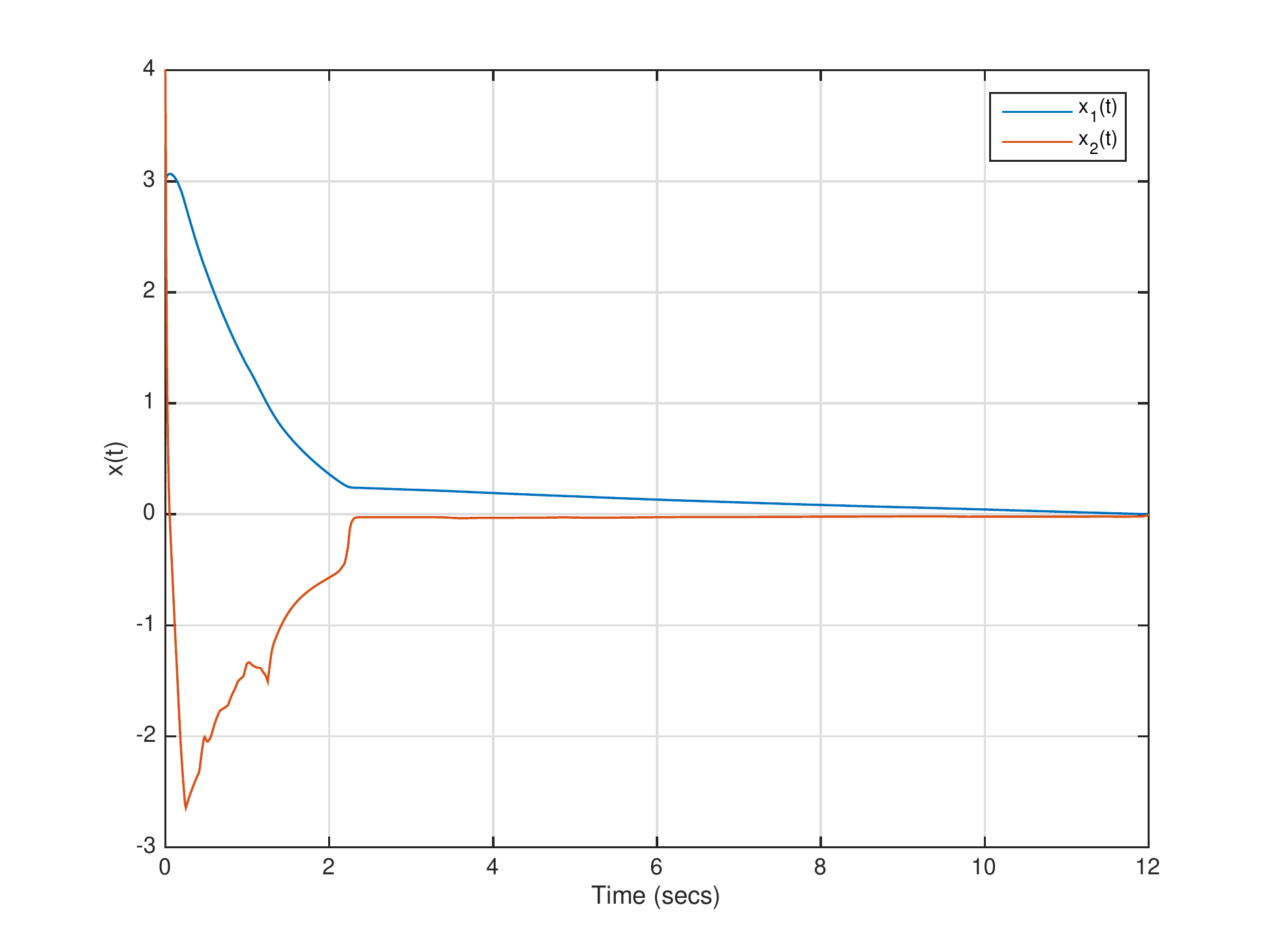}
{\small \caption{Mass-spring-damper system: $x_{1}(t)$ and $x_{2}(t)$}}
\end{figure}

\end{document}